\numberwithin{equation}{section}
\theoremstyle{definition}
\newtheorem{thm}{Theorem}[section]
\newtheorem{prop}[thm]{Proposition}
\newtheorem{lem}[thm]{Lemma}
\newtheorem{rem}[thm]{Remark}
\newtheorem{cor}[thm]{Corollary}
\newtheorem{ex}[thm]{Example}
\newtheorem*{ack}{Acknowledgments}
\newtheorem*{mt}{Main Theorem}
\def\rk{\mathop{\mathrm{rk}}\nolimits}
\title[Log Enriques surfaces of index 7]{Log Enriques surfaces of index 7 and type $A_{15}$}
\author[S.~Taki]{Shingo Taki}
\address{Department of Mathematics, Tokai University,
4-1-1, Kitakaname, Hiratsuka, Kanagawa, 259-1292, JAPAN}
\email{taki@tsc.u-tokai.ac.jp}
\urladdr{http://www.sm.u-tokai.ac.jp/~taki/}
\date{\today}
\subjclass[2010]{Primary 14J26, 14J28; Secondary 14J50}
\keywords{log Enriques surface, $K3$ surface, non-symplectic automorphism}
\dedicatory{Dedicated to Professor Shigeyuki Kondo on the occasion of his sixtieth birthday}
\thanks{}
\begin{document}

\begin{abstract}
We show that there is only one log Enriques surface of index 7 and type $A_{15}$.
\end{abstract}

\maketitle

\tableofcontents

\section{Introduction}\label{Introduction}
We will work over $\mathbb{C}$, the field of complex numbers, throughout this paper.
Let $Z$ be a normal algebraic surface with at worst log terminal singularities.
$Z$ is called \textit{log Enriques} if the irregularity $\dim H^{1}(Z, \mathcal{O}_{Z})=0$
and a positive multiple $IK_{Z}$ of a canonical Weil divisor $K_{Z}$ is linearly equivalent to zero.
The smallest integer $I>0$ satisfying $IK_{Z} \sim 0$ is called the \textit{index} of $Z$.
Without loss of generality, we assume that a log Enriques surface has no Du Val singular points,
because if $Z' \to Z$ is the minimal resolution of all Du Val singular points of $Z$ then
$Z'$ is also a log Enriques surface of the same index of $Z$.

Let $Z$ be a log Enriques surface of index $I$.
The Galois $\mathbb{Z}/I\mathbb{Z}$-cover
\[ \pi: Y:= \text{Spec}_{{\mathcal{O}}_{Z}} \left( \bigoplus_{i=0}^{I-1}\mathcal{O}_{Z}(-iK_{Z}) \right) \to Z \]
is called the (global) \textit{canonical covering}.
Note that $Y$ is either an abelian surface or a $K3$ surface with at worst Du Val singular points, 
and that $\pi$ is unramified over $Z\setminus \text{Sing}(Z)$. If $Y$ is an abelian surface then $I=3$ or 5.
See also \cite{Z1} for details. A log Enriques surface $Z$ is \textit{of type} $A_{m}$ or $D_{n}$ if, by definition,
its canonical cover $Y$ has a  singular point of type $A_{m}$ or $D_{n}$, respectively.

It is interesting to consider the index $I$ of a log Enriques surface.
Blache \cite{Bl} proved that $I\leq 21$. Thus if $I$ is prime then 
$I=2, 3, 5, 7, 11, 13, 17$ or 19.

\begin{thm}[\cite{logEnriques19, logEnriques18, OZorder5, 11, 13-19}]\label{rekishi}
The followings hold:
\begin{itemize}
\item[(1)] There is one log Enriques surface of type $D_{19}$ (resp. $A_{19}$, $D_{18}$), up to isomorphism.
\item[(2)] There are  two log Enriques surfaces of type $A_{18}$, up to isomorphism.
\item[(3)] There are  two log Enriques surfaces of index 5 and type $A_{17}$, up to isomorphism.
\end{itemize}
The followings do not refer to singular points.
But these determine log Enriques surfaces with large prime indices:
\begin{itemize}
\item[(4)]  There are  two maximal log Enriques surfaces of index 11, up to isomorphism.
\item[(5)] If $I$=13, 17 or 19 then there is a unique log Enriques surface of index $I$, up to isomorphism.
\end{itemize}
\end{thm}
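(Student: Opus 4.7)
Since the statement is a compilation drawn from several independent works cited above, the plan is to give a unified strategy that underlies all five items and then indicate how the individual cases specialize. In every case I would start from the canonical cover $\pi: Y \to Z$ described in the introduction. Because the indices appearing are primes $\geq 5$ and none occurs with an abelian cover, $Y$ is a K3 surface with at worst Du Val singularities, and the Galois $\mathbb{Z}/I\mathbb{Z}$-action on $Y$ is generated by a non-symplectic automorphism $\sigma$ of order $I$, that is $\sigma^{*}\omega_{Y} = \zeta_{I}\omega_{Y}$ for a primitive $I$-th root of unity $\zeta_{I}$. Recovering $Z$ from $(Y,\sigma)$ is straightforward once this pair is classified, so the whole problem reduces to classifying pairs $(Y,\sigma)$ satisfying the prescribed numerical conditions.

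For items (1)--(3) the type information forces $Y$ to carry a Du Val singularity of the specified ADE type, and it pins down the prime $I$ via elementary constraints on how $\sigma$ can act on the chain (or fork) of $(-2)$-curves in the minimal resolution. I would then lift $\sigma$ to the minimal resolution $\widetilde Y$ and study the induced action on $H^{2}(\widetilde Y,\mathbb{Z})\cong L_{K3}$. The fixed and anti-invariant lattices of $\sigma$ are $I$-elementary, and their invariants are controlled by the topological Lefschetz number of $\sigma$ together with Nikulin's theory of discriminant forms. The prescribed ADE singularity contributes a large root sublattice with a known $\sigma$-action, and the requirement that the remainder assemble into an admissible $I$-elementary lattice of the correct signature pins down the isomorphism class of $(\widetilde Y,\sigma)$ via the Torelli theorem. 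Existence is handled by an explicit construction (typically as a branched cover of a rational surface), and counting the admissible extensions of the prescribed root lattice inside $L_{K3}$ yields the enumeration stated.

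For items (4) and (5) no type is prescribed, so I would instead invoke the classification of non-symplectic automorphisms of prime order $p$ on K3 surfaces. For $p=13,17,19$ the allowable coinvariant lattice is essentially forced, the moduli space has dimension zero, and the pair $(Y,\sigma)$ is unique, so the quotient $Z=Y/\langle\sigma\rangle$ is unique. For $p=11$ the moduli space has positive dimension, so maximality of $Z$ (the nonexistence of a proper birational log Enriques extension) must be imposed to cut the family down; a finite case analysis of the possible fixed lattices leaves exactly two maximal pairs, hence the two surfaces in (4).

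The hardest step in every item is the lattice-theoretic uniqueness: one must show that, up to the action of the relevant orthogonal group, there is only one (respectively two) primitive embedding of the prescribed root lattice together with the transcendental lattice into $L_{K3}$, and that the candidate lattice automorphism is induced by a geometric automorphism of the K3 surface. This is where the Torelli theorem, Nikulin's discriminant-form machinery, and concrete projective models have to be combined, and it is precisely this combinatorial bookkeeping that the five cited references individually carry out, each tailored to its own prime and its own ADE type.
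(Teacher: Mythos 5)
This statement is background quoted from the five cited references; the paper itself supplies no proof of it, so there is no internal argument to compare your sketch against line by line. Your outline does capture the general methodology that both the references and the rest of this paper use: pass to the canonical cover, classify the pair $(Y,\sigma)$ of a K3 surface with a non-symplectic automorphism of prime order by lattice theory and the Torelli theorem, then descend to the quotient.

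However, two points in your sketch are concretely wrong or gloss over the decisive step. First, your opening claim that ``the indices appearing are primes $\geq 5$'' contradicts the Remark immediately following the theorem: type $A_{19}$ forces index $2$, and types $A_{18}$, $D_{18}$, $D_{19}$ force index $3$. The reason the canonical cover in items (1)--(3) is a K3 surface rather than an abelian surface is not the size of the index but the prescribed Du Val singularity --- an abelian surface is smooth, so it cannot carry an $A_m$ or $D_n$ point. Second, and more seriously, your assertion that ``recovering $Z$ from $(Y,\sigma)$ is straightforward once this pair is classified'' is precisely the step that is not straightforward, and it is where the stated counts (one versus two surfaces) actually come from. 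A single pair $(X,\langle\sigma\rangle)$ can produce non-isomorphic log Enriques surfaces depending on which $\sigma$-stable configuration of $(-2)$-curves is contracted; compare item (4) here (two maximal surfaces of index 11) with Theorem \ref{32511} (4) (a unique K3 pair when $M=U\oplus A_{10}$). The present paper spends all of Sections \ref{correspondence} and \ref{sublattice} on exactly this issue for index 7: even after the K3 pair is shown to be unique, one must still prove that the $A_{15}$-configuration $\Delta$ is unique up to an automorphism of $X$ commuting with $\sigma$, which requires the primitivity and basis computations of Lemmas \ref{not-primitive} and \ref{primitive}. Your proposal collapses this into a single sentence and therefore cannot produce the stated enumerations.
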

\begin{rem}
If a log Enriques surface is of type $A_{19}$ (resp. $A_{18}$, $D_{18}$ or $D_{19}$) 
then its index is 2 (resp. 3).
\end{rem}

Let $\omega _{X}$ be a nowhere vanishing holomorphic $2$-form on an algebraic $K3$ surface $X$
and $\sigma$ an automorphism on $X$ of finite order $I$.
It is called \textit{non-symplectic} if and only if it satisfies 
$\sigma^{\ast }\omega _{X} =\zeta _{I} \omega _{X}$ where $\zeta _{I}$ is a primitive $I$-th root of unity.
To prove Theorem \ref{rekishi}, we studied non-symplectic automorphisms of $K3$ surfaces, 
because the canonical covering $\pi$ is a cyclic Galois covering of order $I$ which 
acts faithfully on the space $H^{0}(Y,\mathcal{O}_{Y}(K_{Y}))$.
And we have gotten the following.

\begin{thm}[{\cite{logEnriques19, OZorder5, 11, 13-19}}]\label{32511}
Let $\sigma_{I}$ be a non-symplectic automorphism of order $I$ on a $K3$ surface $X_{I}$ and
$X_{I}^{\sigma_{I}}$ be the fixed locus of $\sigma_{I}$;
$X_{I}^{\sigma_{I}}=\{x\in X_{I}|\sigma_{I}(x)=x \}$.
Then the followings hold:
\begin{itemize}
\item[(1)]
If $X_{3}^{\sigma_{3}}$ 
consists of only (smooth) rational curves and possibly some isolated points, 
and contains at least 6 rational curves then 
a pair ($X_{3}$, $\langle \sigma_{3} \rangle$) is unique up to isomorphism.
\item[(2)]
If $X_{2}^{\sigma_{2}}$ 
consists of only (smooth) rational curves 
and contains at least 10 rational curves then 
a pair ($X_{2}$, $\langle \sigma_{2} \rangle$) is unique up to isomorphism.
\item[(3)]
If $X_{5}^{\sigma_{5}}$ 
contains no curves of genus $\geq 2$, but contains at least 3 rational curves
then a pair ($X_{5}$, $\langle \sigma_{5} \rangle$) is unique up to isomorphism.
\item[(4)]
Put $M:=\{x\in H^{2}(X_{11},\mathbb{Z})| \sigma_{11}^{\ast} (x) =x\}$.
A pair ($X_{11}$, $\langle \sigma_{11} \rangle$) is unique up to isomorphism
if and only if $M=U\oplus A_{10}$.
\item[(5)] Pairs ($X_{13}$, $\langle \sigma_{13} \rangle$),  ($X_{17}$, $\langle \sigma_{17} \rangle$) and
($X_{19}$, $\langle \sigma_{19} \rangle$) are unique up to isomorphism, respectively.
\end{itemize}
\end{thm}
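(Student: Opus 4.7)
The plan is to reduce each of the five statements to the strong Torelli theorem for K3 surfaces, by showing that the $\sigma_I$-equivariant isometry class of $H^{2}(X_I,\mathbb{Z})$ together with the period is uniquely pinned down by the geometric hypotheses. Set $S:=H^{2}(X_I,\mathbb{Z})^{\sigma_I^{\ast}}$ and $T:=S^{\perp}\subset H^{2}(X_I,\mathbb{Z})$. Since $\sigma_I$ is non-symplectic, $S$ is contained in the N\'eron--Severi lattice and $\sigma_I^{\ast}|_T$ has no eigenvalue $1$, so $\varphi(I)\mid\rk T$, and the transcendental form $\omega_{X_I}$ lies in the $\zeta_I$-eigenspace of $\sigma_I^{\ast}$ on $T\otimes\mathbb{C}$.

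The first step is to translate the fixed-locus data of each case into lattice invariants of $S$. For each prime $I\in\{2,3,5,11\}$ the invariant lattice is known to be $p$-elementary, and there are combinatorial formulae (Nikulin's for involutions, and the analogues for $I=3,5,11$) expressing the number of fixed rational curves, isolated points and higher-genus components in terms of the discriminant invariants of $S$. In cases (1)--(4) the hypothesis of maximal fixed-curve configuration singles out one isomorphism class: the extremal $3$-elementary lattice (with at least six rational curves) for (1), the extremal $2$-elementary lattice (with at least ten rational curves) for (2), the extremal $5$-elementary lattice (with at least three rational curves) for (3), and $U\oplus A_{10}$ for (4). In case (5) no combinatorial input is needed: the bound $\rk T\geq \varphi(I)$ together with $\rk S+\rk T\leq 22$ and $\varphi(13)=12$, $\varphi(17)=16$, $\varphi(19)=18$ forces $\rk T=\varphi(I)$ and therefore a unique $S$.

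The second step is to promote uniqueness of $S$ to uniqueness of the $\sigma_I^{\ast}$-action on the whole K3 lattice. By Nikulin's theory of discriminant forms, a primitive embedding $S\hookrightarrow H^{2}(X_I,\mathbb{Z})$ is determined up to isometry by its genus, and $T$ inherits a free $\mathbb{Z}[\zeta_I]$-module structure compatible with its bilinear form; in each of our rank regimes this structure is unique. Hence the pair $(H^{2}(X_I,\mathbb{Z}),\sigma_I^{\ast})$ is unique up to isomorphism.

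Finally, the period of $(X_I,\sigma_I)$ lies in the $\zeta_I$-eigenspace of $\sigma_I^{\ast}$ acting on $T\otimes\mathbb{C}$, which is one-dimensional in every case (since $T$ has rank $\varphi(I)$ with the primitive $I$-th roots of unity appearing as eigenvalues of equal multiplicity). Thus the period is a single point modulo the centraliser of $\sigma_I^{\ast}$ in the isometry group of $T$, and strong Torelli delivers uniqueness of $(X_I,\langle\sigma_I\rangle)$. I expect the main obstacle to be in (1)--(3): for small $I$ the translation from the topology of $X_I^{\sigma_I}$ to the discriminant invariants of $S$ is not tautological and requires a careful Nikulin-style enumeration, whereas the large-prime cases in (5) are essentially formal consequences of the Euler-$\varphi$ constraint.
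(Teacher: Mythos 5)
The paper does not actually prove Theorem \ref{32511}: it is imported wholesale from the cited references (Oguiso--Zhang and Kond\=o's work on Vorontsov's theorem), so there is no in-paper argument to compare yours against. Your sketch does reconstruct the strategy those references use --- translate the fixed-locus hypotheses into discriminant invariants of the invariant lattice $S$, show $S$ and the equivariant isometry class of $H^{2}$ are unique, observe that the $\zeta_I$-eigenspace containing the period is one-dimensional, and finish with Torelli --- and that is the same template the present paper runs in Section \ref{order7} for $I=7$ (Propositions \ref{id-7-S}, \ref{vo-non-unimodular} and Remark \ref{Torelli}).

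That said, as a proof your proposal has concrete gaps rather than just deferred bookkeeping. First, part (4) is a biconditional: you must also show that when $M\cong U(11)\oplus A_{10}$ the pair is \emph{not} unique (indeed there are two such pairs, matching the ``two maximal log Enriques surfaces of index 11'' in Theorem \ref{rekishi}); your argument only addresses the ``if'' direction, and non-uniqueness cannot come from the eigenspace count alone since $\rk T=\varphi(11)=10$ in both cases --- it comes from the existence of two non-conjugate equivariant structures. Second, in (5) the step ``$\rk T=\varphi(I)$ therefore a unique $S$'' and ``the $\mathbb{Z}[\zeta_I]$-structure on $T$ is unique'' silently uses that $\mathbb{Z}[\zeta_p]$ has class number one for $p=13,17,19$ (a rank-$\varphi(p)$ transcendental lattice is a rank-one module over $\mathbb{Z}[\zeta_p]$, and distinct ideal classes would give distinct structures; this is exactly where the argument would fail for $p=23$). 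Third, the heart of (1)--(3) --- that ``only rational curves, at least $k$ of them'' forces a single triple of discriminant invariants $(r,a,\delta)$ (resp.\ its $3$- and $5$-elementary analogues) --- is the actual content of those statements and is only announced, not carried out; you correctly identify this as the main obstacle, but without it the proof is a plan, not a proof.
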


These theorems miss the case of $I=7$.
The main purpose of this paper is to prove the following theorem:
\begin{mt}
\begin{itemize}
\item[(1)] There is, up to isomorphism, only one log Enriques surface of index 7 and type $A_{15}$.
\item[(2)] If $X_{7}^{\sigma_{7}}$ consists of only smooth rational curves and some isolated points
and contains at least 2 rational curves then
a pair ($X_{7}$, $\langle \sigma_{7} \rangle$) is unique up to isomorphism.
\end{itemize}
\end{mt}

We summarize the contents of this paper.
In Section \ref{order7}, we study $K3$ surfaces with a non-symplectic automorphism and prove Main Theorem (2).
In Section \ref{correspondence}, we  see uniqueness of the $K3$ surface with a non-symplectic automorphism of order 7 
which is constructed from log Enriques surfaces of index 7 and type $A_{15}$.
And we give an example of a construction for such a log Enriques surface 
from a $K3$ surface with a non-symplectic automorphism 7.
In Section \ref{sublattice}, we study a sublattice of type $A_{15}$ in the N\'{e}ron-Severi lattice of a $K3$ surface
and give a proof of Main Theorem (1).

\begin{ack}
The author was partially supported by 
Grant-in-Aid for Young Scientists (B) 15K17520 from JSPS.
\end{ack}

\section{$K3$ surfaces with a non-symplectic automorphism of order 7}\label{order7}
In this section, we collect some basic results 
for non-symplectic automorphisms on a $K3$ surface. 
For the details, see \cite{Ni2} and \cite{AST}, and so on.

For a $K3$ surface $X$, we denote by $S_{X}$ and $T_{X}$
the N\'{e}ron-Severi lattice and the transcendental lattice, respectively.

\begin{lem}\label{sayou}
Let $\sigma$ be a non-symplectic automorphism of order $I$ on $X$.
Then 
\begin{itemize}
\item[(1)] The eigen values of $\sigma^{\ast }\mid T_{X}$ are the 
primitive $I$-th roots of unity, hence
$\sigma^{\ast }\mid T_{X}\otimes \mathbb{C}$ can be diagonalized as:
\[ \begin{pmatrix} 
\zeta_{I} E_{q} & 0 & \cdots & \cdots & \cdots & 0 \\ 
\vdots &  & \ddots &  &  & \vdots \\ 
\vdots &  &  & \zeta_{I}^{n} E_{q} &  & \vdots \\ 
\vdots &  &  &  & \ddots  & 0 \\ 
0 & \cdots & \cdots & \cdots & 0 & \zeta_{I}^{I-1} E_{q} \\ 
\end{pmatrix}, \]
where $E_{q}$ is the identity matrix of size $q$ 
and $1\leq n\leq I-1$ is co-prime with $I$.

\item[(2)] Let $P^{i,j}$ be an isolated fixed point of $\sigma$ on $X$. 
Then $\sigma^{\ast }$ can be written as 
\[ \begin{pmatrix}  \zeta_{I} ^{i} & 0 \\ 0 & \zeta_{I} ^{j}  \end{pmatrix}  \hspace{5mm} (i+j \equiv 1 \mod I) \]
under some appropriate local coordinates around $P^{i,j}$.
\item[(3)] Let $C$ be an irreducible curve in $X^{\sigma}$ and $Q$ a point on $C$. 
Then $\sigma^{\ast }$ can be written as
\[ \begin{pmatrix}  1 & 0 \\ 0 & \zeta_{I}   \end{pmatrix} \] 
under some appropriate local coordinates around $Q$. 
In particular, fixed curves are non-singular.
\end{itemize}
\end{lem}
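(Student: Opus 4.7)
The plan is to treat the three parts in order, with (1) being a Hodge/lattice-theoretic argument and (2)--(3) being local analyses via linearization of a finite-order holomorphic action.

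For (1), I would use the defining property of $T_{X}$: it is the smallest primitive sublattice $L\subseteq H^{2}(X,\mathbb{Z})$ with $\omega_{X}\in L\otimes\mathbb{C}$, since the orthogonal complement of any such $L$ lies in $H^{1,1}\cap H^{2}(X,\mathbb{Z})=S_{X}$. Because $\sigma^{\ast}$ preserves $S_{X}$ it preserves $T_{X}$, and since $(\sigma^{\ast})^{I}=1$ the characteristic polynomial of $\sigma^{\ast}|_{T_{X}}$ factors over $\mathbb{Z}$ as a product of cyclotomic polynomials $\Phi_{d}$ with $d\mid I$. To rule out $d<I$, suppose some such $\Phi_{d}$ appears and set $L_{d}:=\kr\Phi_{d}(\sigma^{\ast})\cap T_{X}$, a primitive $\sigma^{\ast}$-invariant sublattice of positive rank. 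Using $\sigma^{\ast}$-invariance of the intersection form, eigenvectors with eigenvalues $\lambda,\mu$ satisfying $\lambda\mu\neq 1$ are orthogonal; since $\zeta_{I}^{-1}$ is a primitive $I$-th (not $d$-th) root, this gives $\omega_{X}\perp L_{d}\otimes\mathbb{C}$. Then $L_{d}^{\perp}\cap T_{X}$ is a primitive $\sigma^{\ast}$-invariant sublattice of strictly smaller rank whose complexification contains $\omega_{X}$, contradicting minimality. Hence only $\Phi_{I}$ appears, $\sigma^{\ast}|_{T_{X}}$ is diagonalizable over $\mathbb{C}$ with primitive $I$-th roots of unity as eigenvalues, and Galois-invariance of the integer-coefficient characteristic polynomial forces the multiplicity $q$ to be the same across all such eigenvalues, yielding the displayed block form with $\rk T_{X}=\varphi(I)q$.

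For (2) and (3), the key is that a holomorphic action of a finite cyclic group fixing a point is holomorphically linearizable: averaging any local chart over the group produces an equivariant chart in which $\sigma$ acts by an element of $GL_{2}(\mathbb{C})$, and a further linear change of coordinates diagonalizes this finite-order matrix. So near any fixed point one has $\sigma(x,y)=(\zeta_{I}^{i}x,\zeta_{I}^{j}y)$ with $i,j\in\{0,\dots,I-1\}$. Writing $\omega_{X}=f(x,y)\,dx\wedge dy$ with $f(0,0)\neq 0$, one computes $\sigma^{\ast}\omega_{X}=f(\zeta_{I}^{i}x,\zeta_{I}^{j}y)\,\zeta_{I}^{i+j}\,dx\wedge dy$, and comparing with $\sigma^{\ast}\omega_{X}=\zeta_{I}\omega_{X}$ at the fixed point gives $\zeta_{I}^{i+j}=\zeta_{I}$, i.e.\ $i+j\equiv 1\pmod I$. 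For (2), isolatedness excludes a $+1$ eigenvalue (which would yield a fixed curve through the point), so $i,j\in\{1,\dots,I-1\}$. For (3), tangency to $C$ forces one eigenvalue to be $1$, say $i=0$, whence $j=1$; the linearization exhibits $C$ locally as the smooth axis $\{y=0\}$ at $Q$, proving smoothness of fixed curves.

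The main obstacle, and the only genuinely non-formal step, is the minimality argument in (1): one must carefully maintain primitivity when passing to the kernel and orthogonal complement so that the contradiction with the defining property of $T_{X}$ is genuine at the integral (not merely rational) level. Parts (2) and (3) are entirely local and mechanical once the linearization principle is in hand.
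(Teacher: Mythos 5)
Your proof is correct, and it follows the standard arguments: the paper itself gives no proof of this lemma but cites Nikulin and Artebani--Sarti--Taki, where exactly your two ingredients appear (minimality of $T_{X}$ as the primitive sublattice whose complexification contains $\omega_{X}$, plus Cartan linearization of the finite-order action at a fixed point combined with $\sigma^{\ast}\omega_{X}=\zeta_{I}\omega_{X}$). The one step worth making explicit is that each isotypic piece $\kr\Phi_{d}(\sigma^{\ast})$ is nondegenerate for the intersection form (the eigenvalue pairing $\lambda\mu=1$ only matches eigenvalues within the same $d$), so that $L_{d}^{\perp}\cap T_{X}$ really has strictly smaller rank; with that noted, the argument is complete.
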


Lemma \ref{sayou} (1) implies that $\Phi (I)$ divides $\rk T_{X}$, where $\Phi$ is the Euler function.
Lemma \ref{sayou} (2) and (3) imply that 
the fixed locus of $\sigma$ is either empty or the disjoint union of non-singular curves and isolated points:
\[ X^{\sigma}=\{ P_{1}^{i_{1}, j_{1}}, \dots , P_{M}^{i_{M}, j_{M}} \} \amalg C_{1} \amalg \dots \amalg C_{N}, \]
where $P_{k}^{i_{k},j_{k}}$ is an isolated fixed point and $C_{l}$ is a non-singular curve.

The global Torelli Theorem gives the following.
\begin{rem}[{\cite[Lemma (1.6)]{machida-oguiso}}]\label{Torelli}
Let $X$ be a $K3$ surface and $g_{i}$ ($i=1$, $2$) automorphisms of $X$
such that $g_{1}^{\ast}|S_{X}=g_{2}^{\ast}|S_{X}$ and that
$g_{1}^{\ast}\omega _{X}=g_{2}^{\ast}\omega _{X}$.
Then $g_{1}=g_{2}$ in Aut ($X$).
\end{rem}

The Remark says that for study of non-symplectic automorphisms, 
the action on $S_{X}$ is important. Hence the invariant lattice 
$S_{X}^{\sigma}:=\{x\in S_{X}| \sigma^{\ast} (x) =x\}$ plays an essential role
for the classification of non-symplectic automorphisms.

In the following, we denote $\sigma$  a non-symplectic automorphism of order 7
on a $K3$ suface $X$.
The following propositions are keys for Main Theorem (2).

\begin{prop}[\cite{13-19}]\label{vo-non-unimodular}
Assume that $\sigma$ acts trivially on $S_{X}$, hence $S_{X}=S_{X}^{\sigma}$.
If $\Phi (7)=6=\rk T_{X}$ then such a  $K3$ surface is unique.
\end{prop}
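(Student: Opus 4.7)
The plan is to apply the standard framework for non-symplectic automorphisms of prime order on $K3$ surfaces: once the abstract lattice data $(S_X, T_X, \sigma^\ast)$ is pinned down, the global Torelli theorem together with surjectivity of the period map yields uniqueness of the pair $(X, \sigma)$, and Remark \ref{Torelli} promotes this to a surface isomorphism.

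First, I would extract the lattice constraints. Since $\sigma$ acts trivially on $S_X$ and $\rk T_X = 6 = \Phi(7)$, we have $\rk S_X = 22 - 6 = 16$. By Lemma \ref{sayou}(1), each primitive $7$-th root of unity occurs exactly once as an eigenvalue of $\sigma^\ast \mid T_X \otimes \mathbb{C}$ (so $q=1$ in the notation of the lemma). Consequently $T_X$ naturally carries the structure of a rank-one free $\mathbb{Z}[\zeta_7]$-module endowed with a hermitian form whose trace recovers the $T_X$-pairing, with signature $(1,2)$ in the hermitian sense (i.e.\ signature $(2,4)$ as a real quadratic form).

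Next, I would show that $T_X$, and thereby $S_X$, is determined up to isometry. Because $\mathbb{Q}(\zeta_7)$ has class number one, any rank-one hermitian $\mathbb{Z}[\zeta_7]$-lattice of the prescribed signature is classified by a single unit-type invariant (essentially its discriminant). An embedding $T_X \hookrightarrow \Lambda_{K3}$ into the unimodular $K3$ lattice forces this invariant, so $T_X$ is unique up to $\sigma^\ast$-equivariant isometry; by Nikulin's gluing theory the orthogonal complement $S_X$ is then also unique. (Equivalently, one could quote directly from the classification in \cite{AST}.)

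Finally, I would conclude via a strong Torelli argument: given two such pairs $(X_i, \sigma_i)$, an equivariant isometry of transcendental lattices sends one period to the other, since both lie in the same one-dimensional $\zeta_7$-eigenspace of the connected period domain $\mathcal{D}$; this extends across $S_X$ to an isometry of $\Lambda_{K3}$ commuting with $\sigma^\ast$, and Remark \ref{Torelli} then yields $(X_1, \sigma_1) \cong (X_2, \sigma_2)$. The main obstacle is the lattice-theoretic step — verifying that rank-one hermitian $\mathbb{Z}[\zeta_7]$-lattices with the required signature and discriminant form a single isomorphism class, and that the corresponding period domain is connected so that no continuous moduli appear.
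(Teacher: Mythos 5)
The paper offers no proof of this proposition: it is imported verbatim from Oguiso--Zhang \cite{13-19}, and your proposal is essentially a reconstruction of the argument in that reference (going back to Vorontsov and Kondo) --- $T_{X}$ as a rank-one hermitian $\mathbb{Z}[\zeta_{7}]$-lattice, class number one of $\mathbb{Q}(\zeta_{7})$, and the Torelli theorem --- so in that sense you are on the intended route. One step is stated too loosely, though: the mere existence of an embedding $T_{X}\hookrightarrow \Lambda_{K3}$ does \emph{not} force the discriminant invariant; a priori $\det T_{X}=\pm 7^{a}$ could occur for several $a$, each embeddable. What actually pins it down is the combination of (i) $\sigma^{\ast}$ being trivial on $S_{X}$, hence on the discriminant group $A_{T_{X}}\cong A_{S_{X}}$, while $\Phi_{7}(\sigma^{\ast})=0$ on $T_{X}$, so that $7\cdot A_{T_{X}}=0$ and $A_{T_{X}}$ is $7$-elementary, together with (ii) the discriminant computation for trace forms on ideals of $\mathbb{Z}[\zeta_{7}]$ (via the different), or alternatively the classification of invariant lattices in \cite{AST} (Proposition \ref{class-ord7}), which in rank $16$ leaves only $S_{X}=U\oplus E_{8}\oplus A_{6}$ and $T_{X}=U^{2}\oplus K_{7}$. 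Your parenthetical appeal to \cite{AST} effectively supplies this missing step, so the outline is sound once that substitution is made explicit; the concluding Torelli/eigenspace argument is standard and correct.
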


\begin{prop}[{\cite[Theorem 6.3]{AST}}]\label{class-ord7}
Then the fixed locus $X^{\sigma}$ is of the form
\begin{equation*}
X^{\sigma}=
\begin{cases}
\{ P_{1}, P_{2}, P_{3} \} \amalg E&  \text{if $S_{X}^{\sigma}=U\oplus K_{7}$,} \\
\{ P_{1}, P_{2}, P_{3} \} &  \text{if $S_{X}^{\sigma}=U(7)\oplus K_{7}$,} \\
\{ P_{1}, P_{2}, \dots , P_{8} \}\amalg E \amalg \mathbb{P}^{1} & \text{if $S_{X}^{\sigma}=U\oplus E_{8}$,}\\
\{ P_{1}, P_{2}, \dots , P_{8} \}\amalg  \mathbb{P}^{1} & \text{if $S_{X}^{\sigma}=U(7)\oplus E_{8}$,}\\
\{ P_{1}, P_{2}, \dots , P_{13} \}\amalg \mathbb{P}^{1}\amalg \mathbb{P}^{1} & \text{if $S_{X}^{\sigma}=U\oplus E_{8}\oplus A_{6}$.}
\end{cases}
\end{equation*}
Here $E$ is a non-singular curve of genus 1, 
$A_{6}$ or $E_{8}$ are the negative-definite root lattice of type $A_{6}$ or $E_{8}$ respectively. 
We denote by $U$ the even indefinite unimodular lattice of rank 2 and 
$U(7)$ the lattice whose bilinear form is the one on $U$ multiplied by 7. 
The even negative-definite lattice $K_{7}$ is given by Gram matrix 
$\begin{pmatrix}
-4 & 1 \\
1 & -2
\end{pmatrix}$.
Moreover the number of isolated fixed points of type 
$P^{2,6}$ (resp. $P^{3,5}$ or $P^{4,4}$) is 
$(\rk S_{X}^{\sigma}+2)/3$ (resp. $(\rk S_{X}^{\sigma}-1)/3$ or $(\rk S_{X}^{\sigma}-4)/6$.)
\end{prop}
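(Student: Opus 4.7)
The plan is to combine Nikulin's lattice theory with the topological and holomorphic Lefschetz fixed-point formulas.

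First, I would constrain $S_X^\sigma$. Because $\sigma$ is non-symplectic, the subspace $H^2(X,\mathbb{C})^\sigma$ is purely of Hodge type $(1,1)$, so the integral invariant part $H^2(X,\mathbb{Z})^\sigma$ coincides with $S_X^\sigma$. By Lemma~\ref{sayou}(1) its orthogonal complement in $H^2(X,\mathbb{Z})$ is a $\mathbb{Z}[\zeta_7]$-module, hence of rank a positive multiple of $\Phi(7)=6$. Thus $\rk S_X^\sigma\in\{4,10,16\}$. For each rank, Nikulin's results on primitive embeddings into the $K3$ lattice together with the requirement that the orthogonal complement admit an order-$7$ isometry with no nonzero fixed vectors cut the possibilities down to exactly the five hyperbolic lattices listed in the proposition.

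Next, I would determine the fixed locus via two Lefschetz formulas. By Lemma~\ref{sayou}(2) each isolated fixed point has local linearization $\mathrm{diag}(\zeta_7^i,\zeta_7^j)$ with $i+j\equiv 1\pmod 7$, so only the three types $P^{2,6}$, $P^{3,5}$, $P^{4,4}$ occur; write $m_{26}$, $m_{35}$, $m_{44}$ for their numbers, and let $C_1,\ldots,C_N$ be the fixed curves (smooth by Lemma~\ref{sayou}(3)). The topological Lefschetz formula reads
\[
m_{26}+m_{35}+m_{44}+\sum_{l=1}^{N}\bigl(2-2g(C_l)\bigr)\;=\;2+\rk S_X^{\sigma}-\frac{22-\rk S_X^{\sigma}}{6},
\]
since $\sigma^{\ast}$ acts on the orthogonal complement of $S_X^\sigma$ in $H^2(X,\mathbb{Z})$ with each primitive $7$th root of unity appearing with the same multiplicity $(22-\rk S_X^\sigma)/6$. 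The Atiyah-Bott holomorphic Lefschetz formula, combined with the $K3$ adjunction identity $C_l^{2}=2g(C_l)-2$, yields a complementary identity in $\mathbb{Q}(\zeta_7)$ whose real and imaginary parts give two further linear relations on $m_{26}$, $m_{35}$, $m_{44}$ and the genera $g(C_l)$.

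Solving this linear system case by case for each of the five lattices in Step~1 forces $m_{26}=(\rk S_X^\sigma+2)/3$, $m_{35}=(\rk S_X^\sigma-1)/3$, $m_{44}=(\rk S_X^\sigma-4)/6$, excludes fixed curves of genus $\geq 2$, and determines the number of rational and elliptic fixed curves by non-negativity and parity; whether an elliptic fixed curve occurs depends on finer properties of $S_X^\sigma$, notably on whether it contains a hyperbolic plane $U$ as a summand or only the scaled copy $U(7)$. The main obstacle is the lattice enumeration of Step~1: trimming the candidate list down to exactly these five genera requires a careful analysis of the discriminant form modulo $7$ together with some $2$-adic bookkeeping, and the existence of each case must then be certified separately, typically by exhibiting an explicit $K3$ model and invoking the surjectivity of the period map.
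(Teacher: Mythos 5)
Your plan—constraining $S_X^\sigma$ to a $7$-elementary hyperbolic lattice of rank $4$, $10$ or $16$ via Nikulin's theory and then pinning down the fixed locus with the topological and holomorphic Lefschetz formulas—is exactly the strategy of the cited source \cite[Theorem 6.3]{AST}; the paper itself imports this proposition without proof. One small quantitative remark: the Atiyah--Bott identity lives in $\mathbb{Q}(\zeta_7)$ and therefore supplies up to six rational linear relations (one per basis element of $\mathbb{Q}(\zeta_7)$ over $\mathbb{Q}$), not just the two you extract from real and imaginary parts, and it is this larger system that lets one solve for $m_{26}$, $m_{35}$, $m_{44}$ and the curve data outright.
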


In the following, we treat a pair ($X$, $\langle \sigma \rangle$)
whose the fixed locus $X^{\sigma}$ 
consists of smooth rational curves and isolated points, 
and contains at least 2 rational curves. 
We show that the pair ($X$, $\langle \sigma \rangle$)
is unique up to isomorphism.

\begin{prop}\label{id-7-S}
The automorphism $\sigma$ acts trivially on $S_{X}$.
\end{prop}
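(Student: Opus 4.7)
My plan is to combine the classification in Proposition \ref{class-ord7} with a rank count on $H^{2}(X, \mathbb{Z})$. The hypothesis on $X^{\sigma}$ tells us that the fixed locus contains only isolated points and smooth rational curves, with at least two rational components. Going down the list in Proposition \ref{class-ord7}, the first and third rows are ruled out because they contain a genus-one curve $E$, while the second and fourth rows are ruled out because they contain at most one rational curve. Only the last possibility survives, so $S_{X}^{\sigma} \cong U \oplus E_{8} \oplus A_{6}$, and in particular $\rk S_{X}^{\sigma} = 16$.

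Next I would bound $\rk S_{X}$ from above. By Lemma \ref{sayou} (1), the eigenvalues of $\sigma^{\ast}|_{T_{X}}$ are primitive $7$-th roots of unity, so $\Phi(7) = 6$ divides $\rk T_{X}$; hence $\rk T_{X} \geq 6$ and $\rk S_{X} \leq 22 - 6 = 16$. Combining this with the trivial lower bound $\rk S_{X} \geq \rk S_{X}^{\sigma} = 16$ gives $\rk S_{X} = \rk S_{X}^{\sigma} = 16$.

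Finally I would upgrade this rank coincidence to actual triviality of $\sigma^{\ast}$ on $S_{X}$. The minimal polynomial of $\sigma^{\ast}$ on $S_{X} \otimes \mathbb{Q}$ divides $(x - 1)\Phi_{7}(x)$, so $S_{X} \otimes \mathbb{Q}$ decomposes into its $\sigma^{\ast}$-invariant summand and a $\Phi_{7}$-isotypic summand whose $\mathbb{Q}$-dimension is a multiple of $6$. The first summand already accounts for the full rank $16$, so the second must vanish. Therefore $\sigma^{\ast}$ acts as the identity on $S_{X} \otimes \mathbb{Q}$, and since $S_{X}$ is torsion-free this implies that $\sigma^{\ast}$ acts trivially on $S_{X}$ itself.

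The argument has no real obstacle: it is essentially a rank count built on top of the classification of $S_{X}^{\sigma}$ already recorded in Proposition \ref{class-ord7}. The only point meriting care is verifying that the hypothesis on rational fixed curves isolates exactly one row of that classification, which is a direct inspection.
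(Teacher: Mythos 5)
Your proposal is correct and follows essentially the same route as the paper: isolate the last case of Proposition \ref{class-ord7} from the hypothesis on the fixed locus, then use $\rk T_{X}\geq \Phi(7)=6$ from Lemma \ref{sayou}~(1) together with $\rk S_{X}\geq \rk S_{X}^{\sigma}=16$ to force $\rk S_{X}=16$ and hence $S_{X}=S_{X}^{\sigma}$. Your final step via the $(x-1)\Phi_{7}(x)$ isotypic decomposition is just a slightly more explicit version of the paper's observation that the rank-16 invariant lattice must exhaust $S_{X}$.
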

\begin{proof}
Since $X^{\sigma}$ has at least 2 rational curves,
$X^{\sigma}=\{P_{1}, P_{2}, \dots ,P_{13} \} \amalg \mathbb{P}^{1} \amalg \mathbb{P}^{1}$
and $S_{X}^{\sigma}=U\oplus E_{8}\oplus A_{6}$ by 
Proposition \ref{class-ord7}.
We know that $\rk T_{X}\geq 6$ by Lemma \ref{sayou} (1) and 
$\rk S_{X}\geq 16$ since it contains the invariant lattice
$S_{X}^{\sigma}$ which is of rank 16. 
This gives $\rk T_{X}\leq 6$ so that $\rk T_{X}=6$ and $\rk S_{X}= 16$, 
hence $S_{X}$ coincides with $S_{X}^{\sigma}$. 
This implies that the action of $\sigma$ is trivial on the $S_{X}$.
\end{proof}

The following Corollary follows from Proposition \ref{id-7-S} and Proposition \ref{class-ord7}.

\begin{cor}\label{corbunrui}
Under the above hypothesis, 
$S_{X}=U\oplus E_{8}\oplus A_{6}$, $T_{X}=U \oplus U \oplus K_{7}$ 
and the fixed locus $\sigma$ has 2 non-singular rational curves and 13 isolated points: 
$X^{\sigma}=\{P_{1}, P_{2}, \dots ,P_{13} \} \amalg \mathbb{P}^{1} \amalg \mathbb{P}^{1}$.
\end{cor}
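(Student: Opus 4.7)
The plan is to dispatch the first two assertions of the corollary in one line, then identify $T_X$ via Nikulin's lattice machinery. By Proposition~\ref{id-7-S} we have $S_X = S_X^{\sigma}$, and among the five possibilities listed in Proposition~\ref{class-ord7} the only one producing at least two rational fixed curves is $S_X^{\sigma} = U \oplus E_{8} \oplus A_{6}$; this simultaneously pins down $S_X$ and forces the fixed locus to be $X^{\sigma} = \{P_{1},\dots,P_{13}\} \amalg \mathbb{P}^{1} \amalg \mathbb{P}^{1}$.

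It remains to determine $T_X$. I would proceed by computing the genus of $T_X$ and then invoking Nikulin's uniqueness-in-genus theorem. Since $T_X$ is the orthogonal complement of $S_X$ inside the $K3$ lattice $U^{\oplus 3} \oplus E_{8}^{\oplus 2}$, one reads off $\rk T_X = 6$, signature $(2,4)$, and a discriminant form equal to $-q_{S_X}$; because $U$ and $E_{8}$ are unimodular, this form is carried by $\mathbb{Z}/7\mathbb{Z}$ and comes entirely from the $A_{6}$ summand. On the candidate side, the Gram matrix of $K_{7}$ has trace $-6$ and determinant $7$, hence $K_{7}$ is negative-definite of rank $2$ with discriminant $7$, so $U \oplus U \oplus K_{7}$ has signature $(2,4)$ and discriminant group $\mathbb{Z}/7\mathbb{Z}$. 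A short calculation of the two finite quadratic forms on $\mathbb{Z}/7\mathbb{Z}$ then places both lattices in the same genus.

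Nikulin's criterion asserts that an even indefinite lattice $L$ with signature $(p,q)$ satisfying $p, q \geq 1$ and $p+q \geq 2 + \ell(A_L)$ is the unique lattice in its genus. Here $(p,q) = (2,4)$ and $\ell(A_{T_X}) = 1$, so $6 \geq 3$ is comfortably satisfied and we conclude $T_X \cong U \oplus U \oplus K_{7}$. The most delicate point I anticipate is the equality of discriminant forms: since $(\mathbb{Z}/7\mathbb{Z})^{\times}$ has two square classes, there are two isomorphism classes of $\mathbb{Z}/7\mathbb{Z}$-valued quadratic forms modulo $2\mathbb{Z}$, and one must perform an explicit comparison on generators of $A_{A_{6}}$ and $A_{K_{7}}$ rather than appeal only to the order of the discriminant group.
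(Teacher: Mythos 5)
Your proposal is correct and follows the same route the paper intends: the first and third assertions are immediate from Propositions~\ref{id-7-S} and~\ref{class-ord7}, and the identification of $T_{X}$ as the orthogonal complement of $U\oplus E_{8}\oplus A_{6}$ in the $K3$ lattice via discriminant forms and Nikulin's uniqueness criterion is exactly the (unstated) justification behind the paper's one-line proof. The delicate point you flag does check out: a generator of $A_{A_{6}}$ carries $q=-6/7 \bmod 2\mathbb{Z}$, while $2e_{1}^{\ast}$ in $A_{K_{7}}$ carries $4\cdot(-2/7)\equiv 6/7 \bmod 2\mathbb{Z}$, so $q_{K_{7}}\cong -q_{A_{6}}$ and the two lattices are indeed in the same genus.
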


We recall that the dimension of a moduli space of $K3$ surfaces with 
a non-symplectic automorphism of order 7 is $\rk T_{X} / \Phi(7)-1$
(see also \cite[Section 11]{DK}).
In our case, its dimension is 0. Indeed we have the following.

\begin{thm}\label{ord7}
A pair ($X$, $\langle \sigma \rangle$) is unique up to isomorphism, hence
Main Theorem (2) holds.
\end{thm}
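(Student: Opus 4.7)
The plan is to combine Corollary \ref{corbunrui}, Proposition \ref{vo-non-unimodular}, and the Torelli-type statement in Remark \ref{Torelli}. By Corollary \ref{corbunrui} we already know that $\rk T_X = 6 = \Phi(7)$ and that the N\'eron-Severi and transcendental lattices are isomorphic to the fixed pair $U\oplus E_8\oplus A_6$ and $U\oplus U\oplus K_7$; moreover, Proposition \ref{id-7-S} tells us $\sigma$ acts trivially on $S_X$. These are exactly the hypotheses of Proposition \ref{vo-non-unimodular}, so the underlying $K3$ surface $X$ is unique up to isomorphism.

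It remains to upgrade the uniqueness of $X$ to uniqueness of the pair $(X,\langle\sigma\rangle)$. The plan is to show that all non-symplectic automorphisms of order $7$ on this distinguished $X$ that have a fixed locus of the kind we study generate the same cyclic subgroup of $\Aut(X)$. Suppose $\sigma_1$ and $\sigma_2$ are two such automorphisms. By Proposition \ref{id-7-S} each acts trivially on $S_X$, so $\sigma_1^{\ast}|S_X = \sigma_2^{\ast}|S_X$. On the holomorphic $2$-form we have $\sigma_1^{\ast}\omega_X = \zeta_7^a\omega_X$ and $\sigma_2^{\ast}\omega_X = \zeta_7^b\omega_X$ with $a,b$ coprime to $7$, so there exists an integer $k$ with $ak\equiv b\pmod 7$. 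Then $\sigma_1^k$ still acts trivially on $S_X$ and $(\sigma_1^k)^{\ast}\omega_X = \sigma_2^{\ast}\omega_X$. Applying Remark \ref{Torelli} to the pair $(\sigma_1^k,\sigma_2)$ yields $\sigma_2 = \sigma_1^k$, and hence $\langle\sigma_1\rangle=\langle\sigma_2\rangle$.

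Combining the two steps shows that $(X,\langle\sigma\rangle)$ is determined up to isomorphism, which is precisely Main Theorem (2).

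I do not anticipate a serious obstacle at this stage: essentially all the non-trivial work has already been done in establishing the lattice-theoretic picture (Corollary \ref{corbunrui}) and in the uniqueness statement of Proposition \ref{vo-non-unimodular}. The only mild subtlety is that Proposition \ref{vo-non-unimodular} gives uniqueness of $X$ but not a priori of the cyclic group action, which is why a short Torelli argument is needed to finish.
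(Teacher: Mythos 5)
Your proposal is correct and follows essentially the same route as the paper, whose proof is a one-line citation of Proposition \ref{id-7-S}, Proposition \ref{vo-non-unimodular} and Remark \ref{Torelli}. You have merely made explicit the Torelli step (passing from uniqueness of $X$ to uniqueness of the cyclic group $\langle\sigma\rangle$ by comparing $\sigma_1^k$ with $\sigma_2$) that the paper leaves implicit.
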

\begin{proof}
It follows from 
Proposition \ref{id-7-S}, Proposition \ref{vo-non-unimodular}.and Remark \ref{Torelli}.
\end{proof}

\begin{ex}[{\cite[Example 6.1 (3)]{AST}}]\label{exk3w7}
Put 
\[ X_{\text{AST}}:y^{2}=x^{3}+\sqrt[3]{-27/4}x+t^{7}-1, \ 
\sigma_{\text{AST}} (x,y,t) = (x, y, \zeta _{7}t). \]
Then $X_{\text{AST}}$ is a $K3$ surface with $S_{X_{\text{AST}}}=U\oplus E_{8}\oplus A_{6}$
and $\sigma_{\text{AST}}$ is a non-symplectic automorphism of order 7.
Note that $X_{\text{AST}}$ has one singular fiber of type I$_{7}$ over $t=0$, one singular fiber of type II$^{\ast}$ over $t=\infty$
and 7 singular fibers of type I$_{1}$ over $t^{7}=1$.
\end{ex}

\begin{ex}[{\cite[(7.5)]{Kondo1}\label{exk3w7-2}}]
Put
\[X_{\text{Ko}}:y^{2}=x^{3}+t^{3}x+t^{8}, \ \sigma_{\text{Ko}}(x,y,t)=(\zeta _{7}^{3}x,\zeta _{7}y,\zeta _{7}^{2}t) \]
Then $X_{\text{Ko}}$ is a $K3$ surface with $S_{X_{\text{Ko}}}=U\oplus E_{8}\oplus A_{6}$
and $\sigma_{\text{Ko}}$ is a non-symplectic automorphism of order 7.
Note that $X_{\text{Ko}}$ has one singular fiber of type III$^{\ast}$ over $t=0$, one singular fiber of type IV$^{\ast}$ over $t=\infty$
and 7 singular fibers of type I$_{1}$ over $4+27t^{7}=0$.
Moreover the rank of the Mordell-Weil group is 1.
\end{ex}

\begin{rem}\label{rigidity}
The local actions of a non-symplectic automorphism of order 7
at the intersection points of the rational curves appear 
in the following order:
\[ \dots, \begin{pmatrix}  1 & 0 \\ 0 & \zeta_{7}   \end{pmatrix}, \begin{pmatrix}  \zeta_{7} ^{6} & 0 \\ 0 & \zeta_{7} ^{2}  \end{pmatrix},
\begin{pmatrix}  \zeta_{7} ^{5} & 0 \\ 0 & \zeta_{7} ^{3}  \end{pmatrix}, \begin{pmatrix}  \zeta_{7} ^{4} & 0 \\ 0 & \zeta_{7} ^{4}  \end{pmatrix},\]
\[\begin{pmatrix}  \zeta_{7} ^{3} & 0 \\ 0 & \zeta_{7} ^{5}  \end{pmatrix}, \begin{pmatrix}  \zeta_{7} ^{2} & 0 \\ 0 & \zeta_{7} ^{6}  \end{pmatrix},
\begin{pmatrix}  \zeta_{7} & 0 \\ 0 & 1  \end{pmatrix}, \begin{pmatrix}  1 & 0 \\ 0 & \zeta_{7}   \end{pmatrix}, \dots. \]
\end{rem}

\section{A correspondence between log Enriques surfaces and $K3$ surfaces}\label{correspondence}
Let $Z$ be a log Enriques surface of index 7 and  type $A_{15}$ without Du Val singularities, 
$\pi:Y \to Z$ the canonical covering of $Z$ and $f:X\to Y$ the minimal resolution.
Note that  $X$ is uniquely determined up to isomorphism. Recall that $X$ is a $K3$ surface (see also \cite[Theorem 4.1]{Z1} ) 
and $\sigma$ is a non-symplectic automorphism of order 7 induced by $\pi$.

\begin{lem}\label{del-sta}
Let $\Delta$ be the exceptional divisor of the minimal resolution $f$.
Then every component of $\Delta$ is $\sigma$-stable.
\end{lem}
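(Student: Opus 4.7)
The plan is to split the statement into two independent steps: (i) every singular point of $Y$ is fixed (not merely permuted) by the deck transformation associated with $\sigma$, and (ii) for each such fixed singular point $q$, the induced action of $\sigma$ on the dual graph of $f^{-1}(q)$ is trivial. Step (i) rules out any component being sent to one lying over a different singular point of $Y$, while step (ii) handles components sitting over a single singular point; together they give that each component of $\Delta$ is $\sigma$-stable.

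For step (i), I would use the bound $\rho(X)\leq 20$ coming from $X$ being a $K3$ surface. Since $\sigma$ has prime order $7$, every $\sigma$-orbit on the set of singular points of $Y$ has size $1$ or $7$. The exceptional divisor over each Du Val singularity is a negative-definite configuration of smooth rational curves, and the configurations over distinct singular points have disjoint support; hence all their components together are linearly independent in $S_X$. If the $A_{15}$ point of $Y$, which exists by the hypothesis that $Z$ is of type $A_{15}$, were in an orbit of size $7$, we would obtain $7\cdot 15=105$ independent classes in $S_X$, contradicting $\rho(X)\leq 20$. Similarly, any further orbit of size $7$ would contribute at least $7$ more independent classes on top of the $15$ coming from the $A_{15}$ chain, giving $\rho(X)\geq 22$. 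Hence every singular point of $Y$ is $\sigma$-fixed.

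For step (ii), once $\sigma(q)=q$ the induced action on the reduced exceptional divisor $f^{-1}(q)$ preserves intersection numbers and therefore factors through the symmetry group of the corresponding Dynkin diagram, which is of type $A_{n}$, $D_{n}$ or $E_{n}$. All such symmetry groups have order in $\{1,2,6\}$, which is coprime to $7$; hence the induced permutation of components is trivial and every irreducible component of $f^{-1}(q)$ is preserved setwise. I expect the only point requiring care is confirming the linear independence claim used in step (i), but this is immediate from disjointness of supports together with the negative-definiteness of each Du Val configuration, so no genuine obstacle arises.
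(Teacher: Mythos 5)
Your proof is correct, and its core (step~(ii)) is exactly the paper's argument: the permutation of components induced by the order-$7$ automorphism factors through the symmetry group of the Dynkin diagram, whose order ($1$, $2$ or $6$) is coprime to $7$, hence is trivial. Your step~(i), using $\rho(X)\le 20$ to rule out size-$7$ orbits of singular points of $Y$, carefully justifies a point the paper simply asserts (namely that $\Delta$ is a $\sigma$-stable $A_{15}$ chain, i.e.\ that one need only consider the action on a single Du Val configuration), so the approach is essentially the same with a bit more rigor on your part.
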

\begin{proof}
Note that $\Delta$ is $\sigma$-stable and a liner chain of Dynkin type $A_{15}$.
It follows from the fact that the order of symmetry of $\Delta$ is co-prime with 7.
\end{proof}

\begin{prop}\label{le-k3uni}
The pair ($X$, $\langle \sigma \rangle$) is unique up to isomorphism.
\end{prop}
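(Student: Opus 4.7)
The plan is to mimic the proof of Theorem~\ref{ord7}: I will show that $\sigma$ acts trivially on $S_X$ with $S_X = U\oplus E_8\oplus A_6$ and $\rk T_X=6$, and then invoke Proposition~\ref{vo-non-unimodular} together with Remark~\ref{Torelli}. The new input, compared to Section~\ref{order7}, is the $A_{15}$ chain $\Delta = C_1+C_2+\cdots+C_{15}$ on $X$ produced by the minimal resolution $f$ of the $A_{15}$ singularity of $Y$.

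The first step is to use Lemma~\ref{del-sta}: each $C_i$ is $\sigma$-stable, so each class $[C_i]$ lies in $S_X^{\sigma}$. Since the intersection form of an $A_{15}$-configuration is negative definite, the $[C_i]$ are linearly independent, and hence $\rk S_X^{\sigma}\geq 15$.

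Next I would combine this with Lemma~\ref{sayou}(1), which gives $\Phi(7)=6 \mid \rk T_X$, hence $\rk S_X\leq 22-6=16$. Therefore $\rk S_X^{\sigma}\in\{15,16\}$. On the other hand, Proposition~\ref{class-ord7} only allows $\rk S_X^{\sigma}\in\{2,10,16\}$, so we must have $\rk S_X^{\sigma}=16=\rk S_X$. Consequently $\sigma$ acts trivially on $S_X\otimes\mathbb{Q}$, and thus on $S_X$ itself, giving $S_X=S_X^{\sigma}=U\oplus E_8\oplus A_6$ and $\rk T_X=6=\Phi(7)$.

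At this point Proposition~\ref{vo-non-unimodular} yields uniqueness of $X$, and Remark~\ref{Torelli} (combined with the fact that any other non-symplectic order-$7$ automorphism acting trivially on $S_X$ must differ from $\sigma$ by a power, hence generates the same cyclic group) upgrades this to uniqueness of the pair $(X,\langle\sigma\rangle)$. I do not expect any serious obstacle: the crux is the observation that the $A_{15}$-chain alone forces $\rk S_X^{\sigma}\geq 15$, which by the very restrictive list in Proposition~\ref{class-ord7} collapses everything to the top-rank case already analysed in Section~\ref{order7}.
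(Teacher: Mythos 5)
Your argument is correct, but it reaches the decisive intermediate conclusion by a different route than the paper. The paper first notes that, since $\pi$ is unramified over $Z\setminus \text{Sing}(Z)$, every $\sigma$-fixed curve lies in $\Delta$, and then counts fixed points: by Lemma~\ref{del-sta} each of the $15$ components of $\Delta$ is either pointwise fixed or carries exactly two isolated fixed points, so if at most one component were pointwise fixed there would be at least $14$ isolated fixed points, contradicting the maximum of $13$ allowed by Proposition~\ref{class-ord7}; hence $X^{\sigma}$ consists of rational curves and isolated points and contains exactly two rational curves, and Theorem~\ref{ord7} applies. You instead extract from Lemma~\ref{del-sta} only that the $15$ classes $[C_i]$ lie in $S_X^{\sigma}$ and are linearly independent (by negative definiteness of $A_{15}$), so $\rk S_X^{\sigma}\geq 15$, which by the list in Proposition~\ref{class-ord7} forces $S_X^{\sigma}=U\oplus E_8\oplus A_6$ of rank $16$ and hence the trivial action on $S_X$ --- i.e.\ you re-derive the conclusion of Proposition~\ref{id-7-S} without any fixed-point counting and without using the unramifiedness of $\pi$. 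Both arguments then finish identically via Proposition~\ref{vo-non-unimodular} and Remark~\ref{Torelli} (your remark that two order-$7$ non-symplectic automorphisms acting trivially on $S_X$ generate the same cyclic group is exactly the intended use of Remark~\ref{Torelli}). Your route is slightly more economical; the paper's route additionally pins down the fixed locus itself (exactly two pointwise fixed components of $\Delta$), which is reused in Section~\ref{sublattice}. One trivial slip: the ranks allowed by Proposition~\ref{class-ord7} are $4$, $10$, $16$ (the lattice $U\oplus K_7$ has rank $4$, not $2$), but this does not affect your argument, since only $16$ is $\geq 15$.
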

\begin{proof}
Since $\pi$ is unramified over $Z\setminus \text{Sing}(Z)$, every fixed curve by $\sigma$ in $X$ is contained in $\Delta$.
Hence $X^{\sigma}$ contains only smooth rational curves and isolated fixed points.
On the other hand, each component of $\Delta$ has two isolated fixed points or is pointwisely fixed by $\sigma$ by Lemma \ref{del-sta}.

We remark that $\Delta$ consists of 15 smooth rational curves.
If $X^{\sigma}$ has one or less smooth rational curves then
$X^{\sigma}$ has at least 14 or more isolated fixed points.
This is a contradiction by Proposition \ref{class-ord7}.
Then $X^{\sigma}$ consist of exactly two smooth rational curves, and
the claim follows from Theorem \ref{ord7}.
\end{proof}

By the Proposition, if $X^{\sigma}$ consists of only smooth rational curves 
and some isolated points and contains at least 2 rational curves then
a pair ($X$, $\langle \sigma \rangle$) corresponds to log Enriques surfaces of index 7 and type $A_{15}$.
Hence we may identify it with the pair in Example \ref{exk3w7} or Example \ref{exk3w7-2}.
We construct a log Enriques surface of index 7 and type $A_{15}$ 
from a $K3$ surface with a non-symplectic automorphism 7 given by Example \ref{exk3w7}.

\begin{ex}\label{triple}
We consider the pair  ($X_{\text{AST}}$, $\langle \sigma_{\text{AST}} \rangle$) in Example \ref{exk3w7}.
Let $f:X_{\text{AST}}\to Y$ be the contraction of the following rational tree $\Delta _{\text{AST}}$ of Dynkin type $A_{15}$ to a point $Q$:
\[ \Gamma_{2}-\Gamma_{3}-\Gamma_{4}-\Gamma_{5}-\Gamma_{6}-\Gamma_{7}-S
-\Theta_{1}-\Theta_{2}-\Theta_{3}-\Theta_{4}-\Theta_{5}-\Theta_{6}-\Theta_{7}-\Theta_{8}, \]
where $S$ is a cross-section, $\Gamma_{i}$ is a component of a singular fiber of type I$_{7}$
and $\Theta_{j}$ is a component of a singular fiber of type II$^{\ast}$.
Here a singular fiber of type  I$_{7}$ is given by $\sum_{i=1}^{7}\Gamma_{i}$
which $\Gamma_{7}$ meets $S$, 
and a singular fiber of type II$^{\ast}$ is given by $\sum_{j=1}^{6}j\Theta_{j}+4\Theta_{7}+2\Theta_{8}+3\Theta_{9}$.
Hence $\Gamma_{7}$ and $\Theta_{6}$ are fixed curves of $\sigma_{\text{AST}}$.

\begin{figure}[h]
\begin{center}
\includegraphics[width=10.5cm]{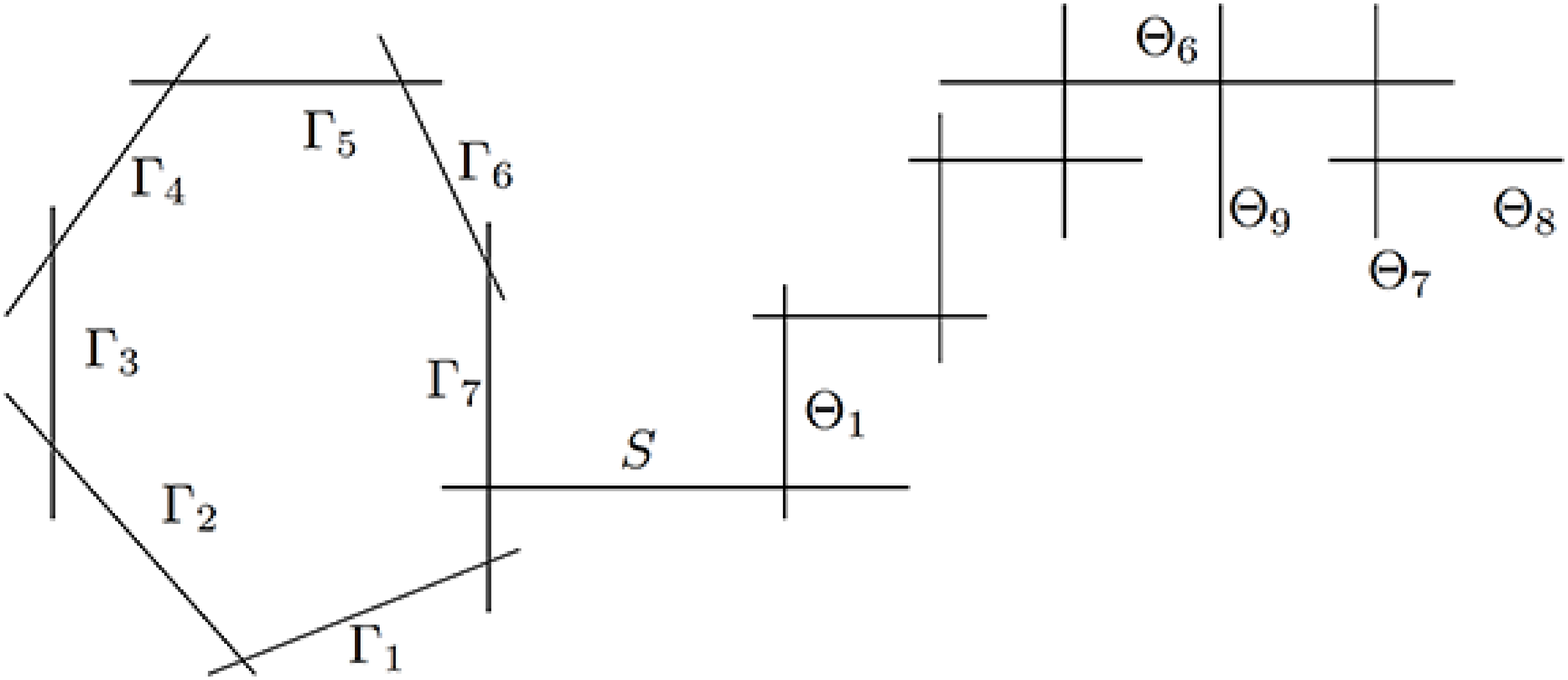}


\end{center}
\caption{Singular fibers of Example \ref{exk3w7}}
\end{figure}

Then $\sigma_{\text{AST}}$ induces an automorphism $\tau$ on $Y$ so that $Y^{\tau}:=\{Q, f(P) \}$
where $P$ is the isolated fixed point of type $P^{2,6}$ on $\Theta_{9}$.
Now the quotient surface $Z_{\text{AST}}:=Y/\tau$ is a log Enriques surface of index 7 and type $A_{15}$.
Note that $Z_{\text{AST}}$ has exactly two singular points
under the two fixed points $Q$ and $f(P)$.
\end{ex}

\begin{rem}
We can fined 13 isolated fixed points and 2 fixed curves of  $\sigma_{\text{AST}}$
on these singular fibers by Remark \ref{rigidity}.
Note that $\Theta_{6}$ is  pointwisely fixed by $\sigma_{\text{AST}}$.

6 isolated fixed points of type $P^{2,6}$ are intersection points of $\Gamma_{1}$ and $\Gamma_{2}$, 
$\Gamma_{5}$ and $\Gamma_{6}$,  $S$ and $\Theta_{1}$, $\Theta_{4}$ and $\Theta_{5}$, 
$\Theta_{7}$ and $\Theta_{8}$, and a point on $\Theta_{9}$.
5 isolated fixed points of type $P^{3,5}$ are intersection points of $\Gamma_{2}$ and $\Gamma_{3}$, 
$\Gamma_{4}$ and $\Gamma_{5}$, $\Theta_{1}$ and $\Theta_{2}$, $\Theta_{3}$ and $\Theta_{4}$,
and a point on $\Theta_{8}$.
2 isolated fixed points of type $P^{4,4}$ are intersection points of $\Gamma_{3}$ and $\Gamma_{4}$, 
and $\Theta_{2}$ and $\Theta_{3}$.
Then $\Gamma_{7}$ is a fixed curve.
\end{rem}

\begin{rem}
In \cite[Example 6.13]{Z1}, we constructed a log Enriques surface of index 7 and type $A_{15}$
that do not use $K3$ surfaces.
Of course we can also see it by contracting some divisors on the minimal resolution of the quotient surface $X/ \sigma$.
\end{rem}

\section{Sublattices of type $A_{15}$}\label{sublattice}

Assume that a pair ($X$, $\langle \sigma \rangle$) corresponds to a log Enriques surface of index 7 and type $A_{15}$.
In the following we write $\Delta=\sum_{i=1}^{15}C_{i}$, which is of Dykin type $A_{15}$
and employ the same symbol $\Delta$ for the sublattice of $S_{X}$
generated by the irreducible component of $\Delta$.

Since the rank of the orthogonal complement $\Delta^{\perp}$ of $\Delta$ in $S_{X}$ is 1,
we may write $\Delta^{\perp}=\mathbb{Z}H$ and assume $H$ is a nef and big divisor.
In fact if $X\to Y$ is the contraction of $\Delta$ then the Picard number of $Y$ is 1, 
thus we can take $H$ as the pull back of the ample generator of $S_{Y}$.

\begin{lem}\label{not-primitive}
The lattice $\Delta$ is a primitive sublattice of $S_{X}$.
\end{lem}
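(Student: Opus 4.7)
The plan is to show that if $\Delta$ were not primitive, then its saturation would be a very specific index-$2$ overlattice, and then to derive a contradiction by pairing the half-integral generator of that overlattice with an explicit divisor class coming from the elliptic fibration in Example~\ref{exk3w7}.

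\textbf{Step 1 (lattice-theoretic reduction).} Let $\tilde\Delta \supseteq \Delta$ denote the primitive closure of $\Delta$ in $S_X$. Since $S_X$ is even, $\tilde\Delta$ is even, so $\tilde\Delta/\Delta$ embeds into the discriminant group $A_\Delta \cong \mathbb{Z}/16\mathbb{Z}$ as an isotropic subgroup for the discriminant quadratic form $q$. A direct evaluation (a generator of $A_\Delta$ has square $-\tfrac{15}{16}$ in $\mathbb{Q}/2\mathbb{Z}$) shows that the only non-zero element $k \in \mathbb{Z}/16\mathbb{Z}$ with $q(k) \equiv 0 \pmod{2}$ is $k = 8$. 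In particular, the subgroup $\{0, 4, 8, 12\}$ is bilinear-isotropic but not quadratic-isotropic, so $[\tilde\Delta:\Delta] \in \{1, 2\}$.

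\textbf{Step 2 (identifying the candidate overlattice).} In the non-trivial case, $\tilde\Delta$ is obtained from $\Delta$ by adjoining the unique class
\[
\omega \;=\; \tfrac12\bigl(C_1 + C_3 + C_5 + C_7 + C_9 + C_{11} + C_{13} + C_{15}\bigr),
\]
the half-sum of the curves at the eight odd positions of the $A_{15}$-chain. One checks $\omega \cdot C_i \in \mathbb{Z}$ for all $i$ and $\omega^2 = -4 \in 2\mathbb{Z}$, consistent with $\tilde\Delta$ being even.

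\textbf{Step 3 (geometric contradiction).} By Proposition~\ref{le-k3uni} the pair $(X, \langle \sigma \rangle)$ is unique up to isomorphism, so we may identify it with $(X_{\text{AST}}, \sigma_{\text{AST}})$ from Example~\ref{exk3w7}. Reading off the chain displayed there, the seventh position is occupied by the cross-section $S$, while the other fourteen positions are filled by components of singular fibers. Writing $F \in S_X$ for the class of a general fiber, we obtain
\[
\omega \cdot F \;=\; \tfrac12\,(S \cdot F) \;=\; \tfrac12,
\]
which is not an integer. Since every class in $S_X$ has integer intersection with $F$, this forces $\omega \notin S_X$, contradicting $\tilde\Delta \subset S_X$. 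Hence $\tilde\Delta = \Delta$, and $\Delta$ is primitive.

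\textbf{Main obstacle.} The delicate point is Step~1: one must invoke the quadratic isotropy condition (not merely bilinear isotropy) to rule out an index-$4$ overlattice attached to the subgroup $\{0, 4, 8, 12\} \subset \mathbb{Z}/16\mathbb{Z}$. With that classification in hand, Step~3 is a one-line intersection-number computation using the elliptic model of Example~\ref{exk3w7}.
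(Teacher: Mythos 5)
Your Steps 1 and 2 are correct, and in fact they sharpen the lattice-theoretic part of the argument: the discriminant form of $A_{15}$ is $(\mathbb{Z}/16\mathbb{Z},\,q(w)=-\tfrac{15}{16})$, its only nontrivial quadratically isotropic subgroup is $\{0,8\}$, and the corresponding half-integral class is exactly $\omega=\tfrac12(C_1+C_3+\cdots+C_{15})$, which has $\omega\cdot C_i\in\mathbb{Z}$ for every $i$ and $\omega^2=-4$. You have therefore correctly identified that no contradiction can be extracted from intersection numbers inside $\Delta$ alone, and that some input external to the chain is required. This is a genuinely different route from the paper, which reduces to a half-sum $\tfrac12\sum_{j\in J}C_j$ via the even-eight lemma and then asserts that some $\bigl(\tfrac12\sum_{j\in J}C_j\bigr)\cdot C_i$ fails to be an integer; for the one admissible $J$ (the odd positions, i.e.\ your $\omega$) that assertion does not hold, so your analysis is the more careful one at this point.

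The gap is in Step 3. Proposition \ref{le-k3uni} gives uniqueness of the pair $(X,\langle\sigma\rangle)$, not of the chain $\Delta$ inside $X$: an isomorphism $X\cong X_{\text{AST}}$ need not carry $\Delta$ to $\Delta_{\text{AST}}$, and indeed pinning down the possible positions of $\Delta$ inside $X_{\text{AST}}$ is the business of the remainder of Section \ref{sublattice}. So you cannot simply ``read off'' that the seventh position is the cross-section. What you actually need is that for \emph{every} admissible chain $\Delta$ (fifteen $\sigma$-stable smooth rational curves in an $A_{15}$ configuration containing the two pointwise fixed curves, cf.\ Lemma \ref{del-sta} and the proof of Proposition \ref{le-k3uni}), the quantity $\sum_{i\ \mathrm{odd}}C_i\cdot F$ is odd. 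This does hold, but it requires the geometric classification of such chains: the components of $\Delta$ must be components of the fibers of type I$_7$ and II$^*$ together with the (unique, since the Mordell--Weil group of $X_{\text{AST}}$ is trivial) section $S$, a connected $A_{15}$ subgraph of that configuration necessarily consists of six consecutive I$_7$-components, then $S$, then eight II$^*$-components, and hence $S$ always occupies position $7$ (odd from either end), giving $\omega\cdot F=\tfrac12$. That classification is carried out (independently of this lemma) in the final Proposition of Section \ref{sublattice}, so your argument is not circular, but the step must be supplied; in particular one must also rule out multisections appearing as components of $\Delta$, since a multisection at an odd position could change the parity of $\omega\cdot F$. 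With that addition your proof is complete.
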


\begin{proof}
Assume that the lattice $\Delta$ is not a primitive sublattice of $S_{X}$.
Let $\overline{\Delta}$ be the primitive closure of $\Delta$  in $S_{X}$.
Since $16=|\det (\Delta)|=[\overline{\Delta}:\Delta]^{2}\det(\overline{\Delta})$,
we have 
$|\det(\overline{\Delta})|=4$ and $[\overline{\Delta}:\Delta]=2$, or 
$|\det(\overline{\Delta})|=1$ and $[\overline{\Delta}:\Delta]=4$.

If $|\det(\overline{\Delta})|=1$, that is, $\overline{\Delta}$ is unimodular then we have 
$|H^{2}|=|\det(S_{X})|=7$ because of Corollary \ref{corbunrui} and $S_{X}=\overline{\Delta}\oplus \mathbb{Z}H$.
This contradicts for the fact that $S_{X}$ is an even lattice.
Then $|\det(\overline{\Delta})|=4$ and $[\overline{\Delta}:\Delta]=2$.

Thus we can find a non-empty subset $J$ of $\{1, 2, \dots , 15\}$ such that 
$\frac{1}{2}\sum_{j\in J}C_{j}$ is contained in $S_{X}$.
Since $C_{i}$ is a non-singular rational curve, $\sharp J=8$ or 16 by \cite[Lemma 3]{Ni} or \cite[Lemma 3.3]{Morrison}.
But there exists an element $i \in \{1, 2, \dots , 15\}$ such that the intersection number 
$\left ( \frac{1}{2}\sum_{j\in J}C_{j} \right )C_{i}$ is not an integer.
This is a contradiction.
\end{proof}

\begin{lem}\label{primitive}
There exists an element $h$ in $S_{X}$ such that $S_{X}=\Delta+\mathbb{Z}h$
and the followings hold:
\begin{itemize}
\item[(1)] The index $[S_{X}:\Delta\oplus \mathbb{Z}H]=16$ and $H^{2}=112$. 
\item[(2)] There exist integers $a_{i}$ such that $h=H/16+\sum_{i=1}^{15}(a_{i}/16)C_{i}$.
\item[(3)] Put $h_{+}:=(H+3\sum_{i=1}^{15}iC_{i})/16$. Then $h\equiv h_{+}$ (mod $\Delta$)
and $\{ C_{1}, C_{2}, \dots, C_{15}, h_{+} \}$ is a $\mathbb{Z}$-basis of $S_{X}$.
\end{itemize}
\end{lem}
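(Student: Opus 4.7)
The plan is to exploit the explicit discriminant values $|\det S_X|=7$ (since $S_X=U\oplus E_8\oplus A_6$ by Corollary \ref{corbunrui}) and $|\det \Delta|=16$ together with the primitivity of both $\Delta$ and $\mathbb{Z}H$ in $S_X$, and then to pin $h$ down modulo $\Delta$ using the integrality of its pairings against the $C_i$ and the evenness of $S_X$.

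For (1), write $k:=[S_X:\Delta\oplus\mathbb{Z}H]$. The standard relation $|\det\Delta|\cdot H^2=k^2\cdot|\det S_X|$ reads $16H^2=7k^2$. By Lemma \ref{not-primitive}, $\Delta$ is primitive in $S_X$; moreover $\mathbb{Z}H$ is primitive since any orthogonal complement in a nondegenerate lattice is automatically saturated. Both primitivities translate into the assertion that $L:=S_X/(\Delta\oplus\mathbb{Z}H)$ injects both into $A(\mathbb{Z}H)\cong\mathbb{Z}/H^2\mathbb{Z}$ and into $A(\Delta)\cong\mathbb{Z}/16\mathbb{Z}$, giving $k\mid H^2$ and $k\mid 16$. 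Combined with $16\mid 7k^2$ and $\gcd(7,16)=1$, one gets $16\mid k$, hence $k=16$ and $H^2=112$.

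For (2), the primitivity of $\Delta$ makes $S_X/\Delta$ torsion-free of rank one, hence cyclic on a generator $h$; since the image of $H$ in $S_X/\Delta$ has order $k=16$, after reversing the sign of $h$ if necessary $16h-H\in\Delta$, which gives the required formula $h=H/16+\sum_i(a_i/16)C_i$ with $a_i\in\mathbb{Z}$.

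For (3), the conditions $h\cdot C_k\in\mathbb{Z}$ for all $k$ yield the tridiagonal congruences $a_{k-1}-2a_k+a_{k+1}\equiv 0\pmod{16}$ (with convention $a_0=a_{16}=0$), whose unique solution modulo $16$ is $a_k\equiv ka_1\pmod{16}$. Shifting $h$ by an element of $\Delta$ so that $a_i=ia_1$ exactly, and using $\sum_{i=1}^{15}i^2=1240$ and $\sum_{i=1}^{14}i(i+1)=1120$, a direct calculation gives $h^2=(7-15a_1^2)/16$. Evenness of $S_X$ forces $h^2\in 2\mathbb{Z}$, i.e.\ $15a_1^2\equiv 7\pmod{32}$, whose solutions modulo $16$ are exactly $a_1\equiv\pm 3$. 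Replacing $(H,h)$ by $(-H,-h)$ if necessary, we may take $a_1\equiv 3\pmod{16}$, so $a_i\equiv 3i\pmod{16}$ and $h-h_+=\sum_i((a_i-3i)/16)C_i\in\Delta$. Hence $h_+\in S_X$ and $h\equiv h_+\pmod\Delta$, and since $h_+$ generates $S_X/\Delta$ the set $\{C_1,\dots,C_{15},h_+\}$ is a $\mathbb{Z}$-basis of $S_X$.

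The main obstacle I foresee is the computation of $h^2$, which requires careful bookkeeping with the negative definiteness of $\Delta$ and the quadratic sum formulas, together with the elimination of the sign ambiguity for $a_1$; the latter corresponds to the automorphism $H\mapsto -H$ of the ambient $\Delta\oplus\mathbb{Z}H$ and is absorbed by the choice of sign of the generator of $\Delta^\perp$.
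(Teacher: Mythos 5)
Your argument is correct and reaches all three conclusions, but it establishes the key index computation by a genuinely different (and cleaner) route than the paper. The paper sets $n:=[S_{X}:\Delta\oplus\mathbb{Z}H]$, gets $16H^{2}=7n^{2}$, and then shows the coefficients $16a_{i}/n$ are integers (via the determinant $-16$ of the Gram matrix of $\Delta$), which yields $n\mid 16$; evenness of $H^{2}$ leaves the two cases $n=8$ and $n=16$, and $n=8$ is only eliminated at the very end, when the integrality of $h^{2}=7/16-240a_{1}^{2}/n^{2}$ fails for $n=8$. You instead inject the glue group $S_{X}/(\Delta\oplus\mathbb{Z}H)$ into both discriminant groups $A(\Delta)\cong\mathbb{Z}/16\mathbb{Z}$ and $A(\mathbb{Z}H)\cong\mathbb{Z}/H^{2}\mathbb{Z}$ (using Lemma \ref{not-primitive} and the saturation of $\Delta^{\perp}$), getting $k\mid 16$ and $k\mid H^{2}$, which together with $16H^{2}=7k^{2}$ force $16\mid k$ and hence $k=16$ immediately. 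This buys you a shorter case-free proof of (1), and your congruence analysis $a_{i}\equiv ia_{1}$, the computation $(\sum iC_{i})^{2}=-240$, and the evenness constraint $15a_{1}^{2}\equiv 7\pmod{32}$ giving $a_{1}\equiv\pm3\pmod{16}$ coincide with the paper's (indeed you are more careful: integrality of $h^{2}$ alone only gives $a_{1}\equiv\pm3\pmod 8$, and the mod $32$ evenness condition is what the paper's parenthetical ``$a_{1}\equiv\pm3$ mod $16$'' actually requires). One small correction: to pass from $a_{1}\equiv-3$ to $a_{1}\equiv 3$ you cannot replace $(H,h)$ by $(-H,-h)$, since $H$ has already been normalized as the nef generator of $\Delta^{\perp}$; the sign ambiguity is instead absorbed by reversing the labeling of the $A_{15}$ chain, which sends $\sum iC_{i}$ to $16\sum C_{i}-\sum iC_{i}$ and hence interchanges $h_{+}$ and $h_{-}:=(H-3\sum iC_{i})/16$ modulo $\Delta$ (exactly one of which lies in $S_{X}$, as $h_{+}+h_{-}=H/8$ cannot). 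The paper glosses over this point entirely, so your proof, with that one-line fix, is sound.
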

\begin{proof}
(1) (2) 
Set $n:=[S_{X}:\Delta\oplus \mathbb{Z}H]$. 
Since Corollary \ref{corbunrui} and $|\det(\Delta\oplus \mathbb{Z}H)|=n^{2}|\det(S_{X})|$,
it satisfies $16H^{2}=7n^{2}$.
After replacing $h$ by $-h$ if necessary, we can find integers $a_{i}$ such that $H=nh-\sum_{i=1}^{15}a_{i}C_{i}$.
Note that $(a_{1}/n, \dots , a_{15}/n)$ is the unique solution of the liner system:
\[ \left( h-\sum_{i=1}^{15}x_{i}C_{i}\right)C_{j}=0 \ \ \ (j=1, \dots, 15).  \]
Since the determinant of the Gramm matrix of $\Delta$, that is,  
$\det(C_{i}, C_{j})=-16$, the numbers $16a_{i}/n$ are integers. 
Hence $16H/n=16h-\sum_{i=1}^{15}(16a_{i}/n)C_{i}=rH$ for some integer $r$, 
so $n$ divides 16 ($=|\det (\Delta)|$).
Since $S_{X}$ is an even lattice, $n=8$ ($H^{2}$=28) or $n=16$ ($H^{2}=112$).

Note that $\sum_{i=1}^{15}(a_{i}C_{i})C_{j}=n(-h.C_{j})\equiv 0$ (mod $n$) for all $j$, hence
\[ -2a_{1}+a_{2}\equiv 0,\ \  a_{i-1}-2a_{i}+a_{i+1}\equiv 0 \ (i=2,3, \dots , 14), \ \ a_{14}-a_{15}\equiv 0 \]
(mod $n$). Thus $a_{i}\equiv ia_{1}$ for all $i=1, 2, \dots , 15$ and
 \begin{align*}
\left(h-\frac{1}{n} \sum_{i=1}^{15}(ia_{1}+a_{i})C_{i} \right)^{2}
&= \frac{1}{n^{2}}\left(H-a_{1}\sum_{i=1}^{15}iC_{i} \right)^{2} \\
&= \frac{1}{n^{2}}(H^{2}-16\times 15a_{1}^{2})\\
&= \frac{7}{16}-\frac{16\times 15a_{1}^{2}}{n^{2}}
\end{align*}
is an integer.
This implies that $n=16$ (and $a_{1}\equiv \pm 3$ mod 16).

(3) It follows from the definition of $h_{+}$.
\end{proof}

In order to prove Main Theorem (1), it suffices to show that $Z$ is isomorphic to
the log Enriques surface $Z_{\text{AST}}$ in Example \ref{triple}.
Hence we show that there exist an automorphism $\varphi: X_{\text{AST}}\to X_{\text{AST}}$
such that $\varphi (\Delta) = \Delta _{\text{AST}}$ and $\varphi \circ\sigma_{\text{AST}} =\sigma_{\text{AST}} \circ \varphi $.

\begin{lem}\label{isometry}
Write $\Delta _{\text{AST}}=\sum_{i=1}^{15}D_{i}$ the same way as $\Delta$. 
Put $\Delta_{\text{AST}}^{\perp}=\mathbb{Z}H_{\text{AST}}$ and $\Delta^{\perp}=\mathbb{Z}H$ in $S_{X_{\text{AST}}}$.
Then there exist an isometry $\Phi$ of the lattice $S_{X_{\text{AST}}}$ 
such that $\Phi (\Delta) = \Delta _{\text{AST}}$,  $\Phi (H) = H_{\text{AST}}$
and $\Phi$ preserves the ample cone.
\end{lem}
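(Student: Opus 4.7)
The plan is, in three steps: first, apply Lemma \ref{primitive} to both $A_{15}$ configurations inside $S_{X_{\text{AST}}}$ to exhibit parallel $\mathbb{Z}$-bases; second, define a candidate isometry $\Phi_0$ by matching these bases and verify it is well-defined; third, correct $\Phi_0$ by an element of the Weyl subgroup $W(A_{15}) \subset W(S_{X_{\text{AST}}})$ so that the ample cone is preserved.

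In more detail, Lemma \ref{primitive} applied to $(\Delta, H)$ produces $h_+ = \bigl(H + 3\sum_{i=1}^{15} iC_i\bigr)/16$ such that $\{C_1, \dots, C_{15}, h_+\}$ is a $\mathbb{Z}$-basis of $S_{X_{\text{AST}}}$; applied to $(\Delta_{\text{AST}}, H_{\text{AST}})$ it produces the analogous class $h'_+ := \bigl(H_{\text{AST}} + 3\sum_{i=1}^{15} iD_i\bigr)/16$ and basis $\{D_1, \dots, D_{15}, h'_+\}$. I define $\Phi_0 \colon S_{X_{\text{AST}}} \to S_{X_{\text{AST}}}$ as the $\mathbb{Z}$-linear extension of $C_i \mapsto D_i$ and $h_+ \mapsto h'_+$. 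The two bases come from the identical template --- an $A_{15}$ chain orthogonal to a primitive class of self-intersection $112$, together with a glue vector given by the same formula --- so their Gram matrices coincide and $\Phi_0$ is an isometry. Unraveling $H = 16 h_+ - 3\sum_i iC_i$ then gives $\Phi_0(H) = H_{\text{AST}}$ and $\Phi_0(\Delta) = \Delta_{\text{AST}}$ as sublattices.

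It remains to force preservation of the ample cone $\mathcal{A}$ of $X_{\text{AST}}$. Let $C_0 := \Phi_0(\mathcal{A})$, a chamber of the $(-2)$-reflection arrangement in the positive cone. Because $H$ is nef and big and satisfies $H \cdot C_i = 0$ for every $i$, it spans the extremal ray $\Delta^{\perp} \cap \overline{\mathcal{A}}$; hence $H_{\text{AST}} = \Phi_0(H)$ spans $\Delta_{\text{AST}}^{\perp} \cap \overline{C_0}$. By Lemma \ref{not-primitive} the sublattice $\Delta_{\text{AST}}$ is primitive in $S_{X_{\text{AST}}}$, so the $(-2)$-vectors of $S_{X_{\text{AST}}}$ orthogonal to $H_{\text{AST}}$ are exactly the roots of $A_{15}$, and the stabilizer of the ray $\mathbb{R}_{\ge 0} H_{\text{AST}}$ inside $W(S_{X_{\text{AST}}})$ is precisely $W(A_{15})$. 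By the standard transitivity of a face stabilizer on the chambers containing that face in their closure, some $w \in W(A_{15})$ satisfies $w(C_0) = \mathcal{A}$. Setting $\Phi := w \circ \Phi_0$, the composition preserves $\mathcal{A}$, and because $w$ fixes $H_{\text{AST}}$ and merely permutes the $D_i$, we still have $\Phi(H) = H_{\text{AST}}$ and $\Phi(\Delta) = \Delta_{\text{AST}}$.

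The main obstacle is precisely that last paragraph: the Weyl correction needed for ample-cone preservation must lie within the simultaneous stabilizer of $\Delta_{\text{AST}}$ and $H_{\text{AST}}$. This is where the primitivity of $\Delta_{\text{AST}}$ in $S_{X_{\text{AST}}}$ (Lemma \ref{not-primitive}) is essential: without it, additional $(-2)$-roots orthogonal to $H_{\text{AST}}$ could appear, enlarging the stabilizer beyond $W(A_{15})$ and leaving room for a correction $w$ that would move $\Delta_{\text{AST}}$ off itself.
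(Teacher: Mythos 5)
Your construction of $\Phi_0$ is essentially the paper's first step: both proofs use Lemma \ref{primitive} to see that the obvious isometries $\Delta\to\Delta_{\text{AST}}$ and $\mathbb{Z}H\to\mathbb{Z}H_{\text{AST}}$ glue to an isometry of all of $S_{X_{\text{AST}}}$ because the glue vector $h_{+}=(H+3\sum_{i}iC_{i})/16$ has the same explicit shape on both sides. Where you genuinely diverge is the ample-cone step. The paper argues directly: $H-\sum_{i}\alpha_{i}C_{i}$ is ample for suitable positive rationals $\alpha_{i}$ (the usual relatively anti-ample perturbation over the contraction of $\Delta$), and its image $H_{\text{AST}}-\sum_{i}\alpha_{i}D_{i}$ is ample by the same token, so $\Phi$ maps an interior point of the ample chamber into the ample chamber and hence preserves it. You instead correct $\Phi_{0}$ by an element $w$ of the stabilizer of $H_{\text{AST}}$ in the $(-2)$-reflection group, identifying that stabilizer with $W(A_{15})$ via Lemma \ref{not-primitive} (which gives $H_{\text{AST}}^{\perp}=\Delta_{\text{AST}}$, so the roots orthogonal to $H_{\text{AST}}$ are exactly the $A_{15}$ roots) and invoking transitivity of a face stabilizer on the chambers containing that face. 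Your route is purely lattice-theoretic and sidesteps the (unargued, though true) claim in the paper that the \emph{same} coefficients $\alpha_{i}$ work on both sides; the paper's route is shorter and produces $\Phi$ without modification. One small imprecision: $w\in W(A_{15})$ does not "merely permute the $D_{i}$" --- it permutes the full root system of $A_{15}$, including non-simple and negative roots --- but it does preserve the sublattice $\Delta_{\text{AST}}$ and fixes $H_{\text{AST}}$, which is all the lemma's conclusion (stated at the level of sublattices) requires; if one later wants $\Phi$ to send each class $C_{i}$ to the class of an irreducible component, a further word about effective versus anti-effective $(-2)$-classes is needed, but that belongs to the following proposition, not to this lemma.
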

\begin{proof}
By Lemma \ref{primitive}, $h_{+}$ is uniquely and precisely expressed $H$ and $C_{j}$.
Two natural isometries $\Phi_{1}: \Delta \to \Delta _{\text{AST}}$ and 
$\Phi_{2}: H \to H _{\text{AST}}$ can be extended to an isometry $\Phi : S_{X_{\text{AST}}}\to S_{X_{\text{AST}}}$
such that $\Phi (\Delta) = \Delta _{\text{AST}}$ and $\Phi (H) = H_{\text{AST}}$.

We note that $H-\sum_{i=1}^{15}\alpha_{i}C_{i}$ and its image by $\Phi$, i.e.,
$H_{\text{AST}}-\sum_{i=1}^{15}\alpha_{i}D_{i}$
are ample for some positive rational numbers $\alpha_{i}$.
Hence $\Phi$ preserves the ample cone.
\end{proof}

\begin{prop}
There exists an automorphism $\varphi$ on $X_{\text{AST}}$ which satisfies 
$\varphi (\Delta) = \Delta _{\text{AST}}$ and $\varphi \circ \sigma_{\text{AST}} =\sigma_{\text{AST}} \circ \varphi $.
\end{prop}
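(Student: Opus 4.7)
The plan is to lift the lattice isometry $\Phi$ from Lemma \ref{isometry} to an honest automorphism of $X_{\text{AST}}$ via the Torelli theorem, and then to deduce commutation with $\sigma_{\text{AST}}$ from Remark \ref{Torelli}.

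First I would extend $\Phi$ to a Hodge isometry of $H^{2}(X_{\text{AST}}, \mathbb{Z})$ by Nikulin's gluing. By Corollary \ref{corbunrui}, $S_{X_{\text{AST}}} = U \oplus E_{8} \oplus A_{6}$, so its discriminant group is cyclic of order $7$, carrying a non-degenerate $\mathbb{Q}/2\mathbb{Z}$-valued form; its only isometries are $\pm \mathrm{id}$. Hence $\Phi$ induces $\pm 1$ on the discriminant. Taking $\Psi := \pm \mathrm{id}_{T_{X_{\text{AST}}}}$ with the matching sign, $\Phi \oplus \Psi$ glues along the anti-isometry $A_{S_{X_{\text{AST}}}} \cong A_{T_{X_{\text{AST}}}}$ to an isometry $\tilde{\Phi}$ of the unimodular lattice $H^{2}(X_{\text{AST}}, \mathbb{Z})$. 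Since $\Psi$ is scalar, it preserves the period line $\mathbb{C} \omega_{X_{\text{AST}}} \subset T_{X_{\text{AST}}} \otimes \mathbb{C}$; combined with the ample-cone preservation from Lemma \ref{isometry}, this makes $\tilde{\Phi}$ an effective Hodge isometry. The strong global Torelli theorem then produces $\varphi \in \Aut(X_{\text{AST}})$ with $\varphi^{\ast} = \tilde{\Phi}$. After replacing $\varphi$ by $\varphi^{-1}$ if necessary to account for pullback versus push-forward, one has $\varphi(\Delta) = \Delta_{\text{AST}}$.

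For the commutation, I would compare $\varphi \sigma_{\text{AST}}$ and $\sigma_{\text{AST}} \varphi$ using Remark \ref{Torelli}. By Proposition \ref{id-7-S}, $\sigma_{\text{AST}}^{\ast}$ acts trivially on $S_{X_{\text{AST}}}$, so both $(\varphi \sigma_{\text{AST}})^{\ast}$ and $(\sigma_{\text{AST}} \varphi)^{\ast}$ restrict to $\varphi^{\ast} = \tilde{\Phi}$ on the N\'eron--Severi lattice. Writing $\varphi^{\ast} \omega_{X_{\text{AST}}} = \lambda \omega_{X_{\text{AST}}}$, both composites send $\omega_{X_{\text{AST}}}$ to $\lambda \zeta_{7} \omega_{X_{\text{AST}}}$. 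Remark \ref{Torelli} then forces $\varphi \sigma_{\text{AST}} = \sigma_{\text{AST}} \varphi$.

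The main obstacle is the lattice-gluing step; but it is essentially free here because the discriminant group has order $7$ and $\pm \mathrm{id}$ already exhausts its orthogonal group, so no delicate discriminant-matching computation is needed, and the choice $\Psi = \pm \mathrm{id}$ automatically respects the period.
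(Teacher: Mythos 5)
Your proof is correct, but it takes a genuinely different route from the paper's at the key construction step. The paper builds $\varphi$ geometrically: since the fixed locus forces $\Delta$ to sit inside the I$_{7}$ fibre, the II$^{\ast}$ fibre and the section, there are only two possible chains, and these differ by a relabeling of the I$_{7}$ components that is realized by an explicit automorphism of the elliptic fibration; one then checks $\varphi^{\ast}|S_{X_{\text{AST}}}=\Phi$. You instead lift $\Phi$ abstractly: the discriminant group of $U\oplus E_{8}\oplus A_{6}$ is $\mathbb{Z}/7\mathbb{Z}$, whose orthogonal group is $\{\pm\mathrm{id}\}$, so Nikulin's gluing criterion lets you extend $\Phi\oplus(\pm\mathrm{id}_{T})$ to an effective Hodge isometry of $H^{2}(X_{\text{AST}},\mathbb{Z})$, and the strong Torelli theorem produces $\varphi$. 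This buys you independence from the explicit fibration geometry and would generalize to other configurations, at the cost of not exhibiting $\varphi$ concretely; you should also note explicitly that $\varphi(C_{i})=D_{i}$ as curves (not just as classes) because an irreducible curve of negative self-intersection is determined by its class under an effective isometry. For the commutation step your argument via Remark \ref{Torelli} is essentially the paper's, and in fact slightly cleaner: the paper asserts that the actions of both composites on all of $H^{1,1}$ are determined by $\varphi$, which is only literally true on $S_{X}\otimes\mathbb{C}$, whereas comparing the restrictions to $S_{X}$ and to $\omega_{X}$ and invoking Machida--Oguiso, as you do, is exactly what is needed.
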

\begin{proof}
Since the fixed locus of $\sigma_{\text{AST}}$ is contained $\Delta$ by Lemma \ref{del-sta}
(see also the proof of Proposition \ref{le-k3uni}),
we may assume that $\Delta$ consists of components of singular fiber of type I$_{7}$, of type II$^{\ast}$
and a cross-section. Hence $\Delta$ is either 
\[ \Gamma_{2}-\Gamma_{3}-\Gamma_{4}-\Gamma_{5}-\Gamma_{6}-\Gamma_{7}-S
-\Theta_{1}-\Theta_{2}-\Theta_{3}-\Theta_{4}-\Theta_{5}-\Theta_{6}-\Theta_{7}-\Theta_{8}, \]
namely $\Delta _{\text{AST}}$, or
\[ \Gamma_{5}-\Gamma_{4}-\Gamma_{3}-\Gamma_{2}-\Gamma_{1}-\Gamma_{7}-S
-\Theta_{1}-\Theta_{2}-\Theta_{3}-\Theta_{4}-\Theta_{5}-\Theta_{6}-\Theta_{7}-\Theta_{8}.\]

By specifying (essentially relabeling) components of a singular fiber of type I$_{7}$, 
we can find an automorphism $\varphi$ on $X_{\text{AST}}$ satisfying $\varphi^{\ast}|S_{X_{\text{AST}}}=\Phi$ in Lemma \ref{isometry}.

We remark that $\sigma_{\text{AST}}$ acts trivially on $S_{X_{\text{AST}}}$ by Proposition \ref{id-7-S}.
Thus each action of  $\varphi \circ \sigma_{\text{AST}}$  and $\sigma_{\text{AST}} \circ \varphi$ on $H^{1,1}(X_{\text{AST}})$
is determined by $\varphi$ only.
Since $H^{2,0}(X_{\text{AST}})$ and $H^{0,2}(X_{\text{AST}})$ are both 1-dimensional,
$(\varphi \circ \sigma_{\text{AST}})^{\ast} =(\sigma_{\text{AST}} \circ \varphi)^{\ast} $ on $H^{2}(X, \mathbb{C})$.
Hence $\varphi \circ \sigma_{\text{AST}} =\sigma_{\text{AST}} \circ \varphi $ by the Torelli theorem.
\end{proof}

\end{document}